\patchcmd{\@citex}{\if@filesw}{\getcitekey\@citeb \if@filesw}%
    {\typeout{*** SUCCESS ***}}{\typeout{*** FAIL ***}}
\patchcmd{\nocite}{\if@filesw}{\getcitekey\@citeb \if@filesw}%
    {\typeout{*** SUCCESS ***}}{\typeout{*** FAIL ***}}
\tikzset{commutative diagrams/arrow style=Latin Modern}
\newcommand{\abs}[1]{\lvert #1 \rvert}
\newcommand{\tensor}{\otimes}
\newcommand{\NN}{\mathbb{N}}
\newcommand{\CC}{\mathbb{C}}
\newcommand{\PP}{\mathbb{P}}
\newcommand{\shf}[1]{\mathscr{#1}}
\newcommand{\OX}{\shf{O}_X}
\newcommand{\restr}[1]{\big\vert_{#1}}
\def\overbar#1#2#3{{%
	\setbox0=\hbox{\displaystyle{#1}}%
	\dimen0=\wd0
	\advance\dimen0 by -#2 
	\vbox {\nointerlineskip \moveright #3 \vbox{\hrule height 0.3pt width \dimen0}%
		\nointerlineskip \vskip 1.5pt \box0}%
}}
\newcommand{\fu}{f^{\ast}}
\newcommand{\fl}{f_{\ast}}
\newcommand{\pu}{p^{\ast}}
\newcommand{\tl}{t_{\ast}}
\newcommand{\gu}{g^{\ast}}
\newcommand{\muu}{\mu^{\ast}}
\newcommand{\shF}{\shf{F}}
\newcommand{\shO}{\shf{O}}
\let\@@seccntformat\@seccntformat
\renewcommand*{\@seccntformat}[1]{%
  \expandafter\ifx\csname @seccntformat@#1\endcsname\relax
    \expandafter\@@seccntformat
  \else
    \expandafter
      \csname @seccntformat@#1\expandafter\endcsname
  \fi
    {#1}%
}
\newcommand*{\@seccntformat@subsection}[1]{%
  \textbf{\csname the#1\endcsname.}
}
\let\@paragraph\paragraph
\renewcommand*{\paragraph}[1]{%
	\vspace{0.3\baselineskip}%
	\@paragraph{\textit{#1}}%
}
\newtheorem{theorem}[equation]{Theorem}
\newtheorem*{theorem*}{Theorem}
\newtheorem{lemma}[equation]{Lemma}
\newtheorem*{lemma*}{Lemma}
\newtheorem*{corollary*}{Corollary}
\newtheorem{proposition}[equation]{Proposition}
\newtheorem*{proposition*}{Proposition}
\newtheorem{conjecture}[equation]{Conjecture}
\newtheorem*{conjecture*}{Conjecture}
\theoremstyle{definition}
\newtheorem*{definition*}{Definition}
\theoremstyle{remark}
\newtheorem{example}[equation]{Example}
\newtheorem*{example*}{Example}
\newtheorem*{problem*}{Problem}
\theoremstyle{plain}
\newcommand{\theoremref}[1]{\hyperref[#1]{Theorem~\ref*{#1}}}
\newcommand{\lemmaref}[1]{\hyperref[#1]{Lemma~\ref*{#1}}}
\newcommand{\definitionref}[1]{\hyperref[#1]{Definition~\ref*{#1}}}
\newcommand{\propositionref}[1]{\hyperref[#1]{Proposition~\ref*{#1}}}
\newcommand{\conjectureref}[1]{\hyperref[#1]{Conjecture~\ref*{#1}}}
\newcommand{\corollaryref}[1]{\hyperref[#1]{Corollary~\ref*{#1}}}
\newcommand{\exampleref}[1]{\hyperref[#1]{Example~\ref*{#1}}}
\newcommand{\exerciseref}[1]{\hyperref[#1]{Exercise~\ref*{#1}}}
\let\old@caption\caption
\renewcommand*{\caption}[1]{%
	\setcounter{figure}{\value{equation}}%
	\stepcounter{equation}%
	\old@caption{#1}\relax%
}
\newcounter{intro}
\newtheorem{intro-conjecture}[intro]{Conjecture}
\newtheorem{intro-corollary}[intro]{Corollary}
\newtheorem{intro-theorem}[intro]{Theorem}
\newcommand{\OY}{\shO_Y}
\newcommand{\newpar}{\subsection{}}
\newcommand{\parref}[1]{\hyperref[#1]{\S\ref*{#1}}}
\newcommand{\chapref}[1]{\hyperref[#1]{Chapter~\ref*{#1}}}
\newcommand*\if@single[3]{%
  \setbox0\hbox{${\mathaccent"0362{#1}}^H$}%
  \setbox2\hbox{${\mathaccent"0362{\kern0pt#1}}^H$}%
  \ifdim\ht0=\ht2 #3\else #2\fi
  }
\newcommand*\rel@kern[1]{\kern#1\dimexpr\macc@kerna}
\newcommand*\widebar[1]{\@ifnextchar^{{\wide@bar{#1}{0}}}{\wide@bar{#1}{1}}}
\newcommand*\wide@bar[2]{\if@single{#1}{\wide@bar@{#1}{#2}{1}}{\wide@bar@{#1}{#2}{2}}}
\newcommand*\wide@bar@[3]{%
  \begingroup
  \def\mathaccent##1##2{%
    \if#32 \let\macc@nucleus\first@char \fi
    \setbox\z@\hbox{$\macc@style{\macc@nucleus}_{}$}%
    \setbox\tw@\hbox{$\macc@style{\macc@nucleus}{}_{}$}%
    \dimen@\wd\tw@
    \advance\dimen@-\wd\z@
    \divide\dimen@ 3
    \@tempdima\wd\tw@
    \advance\@tempdima-\scriptspace
    \divide\@tempdima 10
    \advance\dimen@-\@tempdima
    \ifdim\dimen@>\z@ \dimen@0pt\fi
    \rel@kern{0.6}\kern-\dimen@
    \if#31
      \overline{\rel@kern{-0.6}\kern\dimen@\macc@nucleus\rel@kern{0.4}\kern\dimen@}%
      \advance\dimen@0.4\dimexpr\macc@kerna
      \let\final@kern#2%
      \ifdim\dimen@<\z@ \let\final@kern1\fi
      \if\final@kern1 \kern-\dimen@\fi
    \else
      \overline{\rel@kern{-0.6}\kern\dimen@#1}%
    \fi
  }%
  \macc@depth\@ne
  \let\math@bgroup\@empty \let\math@egroup\macc@set@skewchar
  \mathsurround\z@ \frozen@everymath{\mathgroup\macc@group\relax}%
  \macc@set@skewchar\relax
  \let\mathaccentV\macc@nested@a
  \if#31
    \macc@nested@a\relax111{#1}%
  \else
    \def\gobble@till@marker##1\endmarker{}%
    \futurelet\first@char\gobble@till@marker#1\endmarker
    \ifcat\noexpand\first@char A\else
      \def\first@char{}%
    \fi
    \macc@nested@a\relax111{\first@char}%
  \fi
  \endgroup
}
\newcommand{\shI}{\mathcal{I}}
\begin{document}

\title{Singular metrics and a conjecture by Campana and Peternell}

\author[Ch.~Schnell]{Christian Schnell}
\address{Department of Mathematics, Stony Brook University, Stony Brook, NY 11794-3651}
\email{christian.schnell@stonybrook.edu}

\begin{abstract}
	A conjecture by Campana and Peternell says that if a positive multiple
	of $K_X$ is linearly equivalent to an effective divisor $D$ plus a
	pseudo-effective divisor, then the Kodaira dimension of $X$ should be at
	least as big as the Iitaka dimension of $D$. This is a very useful
	generalization of the non-vanishing conjecture (which is the case $D=0$). We use
	recent work about singular metrics on pluri-adjoint bundles to show
	that the Campana-Peternell conjecture is almost equivalent to the non-vanishing
	conjecture.
\end{abstract}
\date{\today}
\maketitle

\section{Introduction}

\newpar
Let $X$ be a smooth projective variety over the complex numbers, and denote by $K_X$
the canonical divisor class. The non-vanishing conjecture predicts that if $K_X$ is
pseudo-effective, then some positive multiple of $K_X$ is
effective.\footnote{Hashizume \cite{H} has shown that this version is equivalent to
the full non-vanishing conjecture for lc pairs.}
This is a special case of the famous abundance conjecture from the minimal model
program. At some point, Campana and Peternell \cite[Conj.~on p.~43]{CP} suggested
the following generalization of the non-vanishing conjecture.

\begin{conjecture}[Campana-Peternell] \label{conj:CP}
Let $X$ be a smooth projective variety, and $D$ an effective divisor on $X$. Suppose that,
for some $m \geq 1$, the divisor class $m K_X - D$ is pseudo-effective. Then one
should have $\kappa(X) \geq \kappa(X, D)$.
\end{conjecture}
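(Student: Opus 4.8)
The plan is to deduce the Campana--Peternell conjecture from the non-vanishing conjecture, using the recent positivity results for singular metrics on pluri-adjoint bundles to transport pluricanonical sections along a fibration. First, observe that the hypothesis already forces $K_X$ to be pseudo-effective: since $mK_X = D + (mK_X - D)$ is the sum of an effective and a pseudo-effective class, $mK_X$, and hence $K_X$, is pseudo-effective. In particular, when $\kappa(X,D)=0$ the conjecture \emph{is} the non-vanishing conjecture for $X$, so assume $k := \kappa(X,D) \geq 1$. Now pass to a log resolution $\mu\colon X'\to X$ and replace $D$ by $\mu^{\ast}D$: this changes neither $\kappa(X)$ nor $\kappa(X,D)$ and preserves the hypothesis, because $mK_{X'} - \mu^{\ast}D = \mu^{\ast}(mK_X - D) + mE$ with $E\geq 0$. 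On a suitable model the Iitaka fibration of $D$ is a morphism $f\colon X'\to Z$ with $\dim Z = k$ and connected fibres, and for $\ell$ large and divisible one has $\ell\mu^{\ast}D = f^{\ast}A + F_{\ell}$ with $A$ ample on $Z$ and $F_{\ell}\geq 0$; hence $\ell m K_{X'} - f^{\ast}A = \ell\,(mK_{X'}-\mu^{\ast}D) + F_{\ell}$ is pseudo-effective. Renaming $X'$ as $X$, we are reduced to the following situation: a fibration $f\colon X\to Z$ with $\dim Z = \kappa(X,D)$, an ample divisor $A$ on $Z$, and $NK_X - f^{\ast}A$ pseudo-effective for some $N\geq 1$; and we must show $\kappa(X)\geq\dim Z$.

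Second, restrict to a general fibre $F$ of $f$. Adjunction gives $K_F = K_X|_F$, and since $f^{\ast}A|_F\equiv 0$, restricting the pseudo-effective class $NK_X - f^{\ast}A$ to a very general fibre shows that $K_F$ is pseudo-effective. By the non-vanishing conjecture applied to $F$ we obtain $\kappa(F)\geq 0$, so for some $N'\geq 1$ the torsion-free sheaf $f_{\ast}\omega_{X/Z}^{\otimes N'}$ on $Z$ is non-zero.

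Third --- the heart of the argument --- I would use the singular metric of semi-positive curvature on the pseudo-effective line bundle $\mathcal{O}_X(NK_X - f^{\ast}A)$ to put a positively curved metric on an appropriate twist of $\omega_{X/Z}^{\otimes N'}$, invoke the weak (metric) positivity of its direct image along $f$ in the form proved by Paun--Takayama \cite{PT} and Fujino--Matsumura \cite{FM}, and then tensor by a large power of $A$ to produce global sections of a multiple of $K_X$ with Iitaka dimension at least $\dim Z$. The role of the hypothesis is precisely that the pseudo-effectivity of $NK_X - f^{\ast}A$ lets the pullback of an ample class from $Z$ be absorbed into a multiple of $K_X$; this is the substitute, in the absence of ``$Z$ of general type'', for the input that makes Viehweg's proof of $C_{n,m}$ over bases of general type work.

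I expect this third step to be the main obstacle. An unconditional $C_{n,m}$-type inequality over a base that need not be of general type is not available, and the delicate points are: producing an \emph{honest} section rather than merely controlling a numerical dimension --- which is why it matters that $\kappa(F)\geq 0$ yields genuine pluricanonical sections on the fibres, and that $f^{\ast}A$ comes from an honestly ample (hence semi-ample and big) class and not just a pseudo-effective one --- together with the bookkeeping of multiplier ideals and birational models so that all the twists line up. One should also expect the reduction to call for the non-vanishing conjecture in its log canonical pair version rather than the absolute one; by Hashizume's theorem \cite{H} these are equivalent, and this is the sense in which the two conjectures are ``almost'' the same.
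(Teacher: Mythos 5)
The statement you are trying to prove is a conjecture: the paper does not prove it, and neither do you. Your first two steps faithfully reproduce the paper's reductions (pass to a model where the Iitaka fibration of $D$ is a morphism $f \colon X \to Z$ with an ample $A$ on $Z$ and $NK_X - \fu A$ pseudo-effective; restrict to a general fibre to see that $K_F$ is pseudo-effective and invoke non-vanishing to get $\kappa(F) \geq 0$), so up to that point you have, like the paper, only reduced \conjectureref{conj:CP} modulo non-vanishing to \conjectureref{conj:CP-strong}. The genuine gap is your third step, which you yourself flag as ``the main obstacle'' and never carry out. Moreover, it cannot be carried out by the method you describe without an extra hypothesis you do not have. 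The paper's mechanism is the factorization
\[
	\fl \OX \bigl( m r K_X - \fu A \bigr) \cong
	\fl \OX \bigl( K_X + L_n \bigr) \tensor \OY(kA) \tensor \OY(n K_Z + \ell A),
	\qquad L_n = nK_{X/Z} + (k+\ell+1)(m_0 K_X - \fu A),
\]
and the P\u{a}un--Takayama metric together with the Fujino--Matsumura vanishing theorem only handles the factor $\fl \OX(K_X + L_n) \tensor \OY(kA)$. The leftover factor $\OY(nK_Z + \ell A)$ must be made effective for all large $n$ by effective non-vanishing on the base, and this requires $K_Z$ to be pseudo-effective. But the Iitaka base $Z$ of $D$ has no reason to satisfy this: it can perfectly well be uniruled, even $\PP^1$, and in that case $nK_Z + \ell A$ is eventually not effective for fixed $\ell$, so the pullback of the ample class cannot be ``absorbed into a multiple of $K_X$'' in the way you hope. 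This is exactly why the paper proves only \theoremref{thm:main} (the case where the base canonical class is pseudo-effective), reduces the rest to a rationally connected base, and explicitly admits being unable to say anything even for $Y = \PP^1$.

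Two smaller corrections. First, your closing remark misreads the word ``almost'': the paper's ``almost equivalent'' does not refer to the absolute versus lc-pair form of non-vanishing (Hashizume's theorem is only quoted to say those are equivalent), but to the unresolved case of a rationally connected base just described. Second, in your reduction you should be slightly careful that pseudo-effectivity restricts to the \emph{very general} fibre (as you say) and that the decomposition $\ell \muu D = \fu A + F_\ell$ is obtained after an embedded resolution of the linear system, as in the paper's \parref{par:algorithm}; these points are fine as written but are where the bookkeeping you allude to actually lives.
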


Here $\kappa(X, D)$ is the Iitaka dimension of the divisor $D$, and $\kappa(X) =
\kappa(X, K_X)$ the Kodaira dimension of $X$. The Campana-Peternell conjecture
contains the non-vanishing conjecture as the special case $D = 0$, and therefore sits
somewhere in between the non-vanishing conjecture and the abundance conjecture.
It is obviously weaker than the full abundance conjecture, but still extremely useful
in practice, for example for questions related to the behavior of Kodaira dimension
in algebraic fiber spaces \cite{families}.

\newpar
The purpose of this note is to show that the Campana-Peternell conjecture is
almost equivalent to the non-vanishing conjecture. (I believe that the two
conjectures are in fact equivalent; I write ``almost'' because there is one special
case involving rationally connected varieties where the question remains unresolved.)
The main tool is the existence of certain singular
hermitian metrics on pluri-adjoint bundles, introduced by P\u{a}un and Takayama
\cite{PT}. For a non-expert like myself, one mysterious aspect of the non-vanishing
conjecture -- and of \conjectureref{conj:CP} more generally -- is how adding
multiples of $K_X$ can possibly make things better. We will see below that the metric
techniques provide at least one mechanism for this. 

\newpar
I thank Christopher Hacon for answering some questions about the abundance
conjecture. I also had a useful email exchange about the Campana-Peternell
conjecture with Behrouz Taji. While writing this paper, I have been partially supported by NSF grant DMS-1551677 and a Simons Fellowship from the Simons Foundation. I thank both of
these institutions for their support, and the Max-Planck-Institute for providing me
with excellent working conditions during my stay in Bonn. 

\section{Equivalent formulations of the conjecture}

\newpar \label{par:algorithm}
We begin by formulating \conjectureref{conj:CP} in a way that is closer to the
non-vanishing conjecture. Campana and Peternell observed that, up to birational
modifications of $X$, one can always assume that the divisor $D$ is base-point free.
Here is why. Choose $n$ large enough, in order for the linear system $\abs{nD}$ to
give the Iitaka fibration of $D$, and let $\mu \colon X' \to X$ be an embedded
resolution of singularities of the linear system $\abs{nD}$. Then one has a
decomposition $\muu \abs{nD} = \abs{F} + E$, where $F$ is base-point free and $E$ is
effective. But now 
\[
	nm K_{X'} - F \equiv n \muu(m K_X - D) + nm K_{X'/X} + E
\]
is the sum of a pseudo-effective divisor and an effective divisor, hence
pseudo-effective. This reduces \conjectureref{conj:CP} to the case where $D$ is
base-point free.

\newpar
Now suppose that $D$ is base-point free. After replacing $D$ by a multiple, we may
assume that the linear system $\abs{D}$ defines a surjective morphism $f \colon X \to
Y$ to a projective variety $Y$ with $\dim Y = \kappa(X, D)$. We also get a very ample
divisor $H$ such that $\fu H = D$; consequently, $mK_X - \fu H$ is pseudo-effective.
After replacing $f \colon X \to Y$ by its Stein factorization, we may assume that $Y$
is normal, and that $f$ has connected fibers.  Let $\nu \colon Y' \to Y$ be a
resolution of singularities, and choose a compatible resolution of singularities $\mu
\colon X' \to X$, giving us a commutative diagram
\[
	\begin{tikzcd}
		X' \dar{f'} \rar{\mu} & X \dar{f} \\
		Y' \rar{\nu} & Y.
	\end{tikzcd}
\]
The divisor $\nu^{\ast} H$ is big and nef; after replacing $H$ by a big enough
multiple, we can therefore find an ample divisor $H'$ on $Y'$ such that $\nu^{\ast} H
- H'$ is effective. Now
\[
	m K_{X'} - (f')^{\ast} H' \equiv m K_{X'/X} + \mu^{\ast} \bigl( m K_X - \fu H \bigr)
	+ (f')^{\ast} \bigl( \nu^{\ast} H - H' \bigr)
\]
is still pseudo-effective, and since $\kappa(X') = \kappa(X)$,
\conjectureref{conj:CP} is reduced to the case where $D$ is the pullback of an ample
divisor along an algebraic fiber space.

\newpar
This leads to an equivalent formulation of
\conjectureref{conj:CP}. Let $f \colon X \to Y$ be an algebraic fiber space; by this
I mean that $X$ and $Y$ are smooth projective varieties, and that $f$ is surjective
with connected fibers. Let $H$ be an ample divisor on $Y$, and suppose that for some
$m \geq 1$, the divisor class $m K_X - \fu H$ is pseudo-effective. The
Campana-Peternell conjecture is equivalent to the statement that $\kappa(X) \geq \dim
Y$; this is clear from the discussion above.

\newpar \label{par:strong}
In fact, one can get a even stronger prediction out of \conjectureref{conj:CP}.
To state it, let $F$ denote the general fiber of the algebraic fiber space $f \colon
X \to Y$.  If the divisor class $m K_X - \fu H$ is pseudo-effective, then $K_F$ is
also pseudo-effective; according to the non-vanishing conjecture, this should imply
that $\kappa(F) \geq 0$. 

\begin{lemma} \label{lem:fiber-space}
	Let $f \colon X \to Y$ be an algebraic fiber space with general fiber $F$.
	Let $H$ be an ample divisor on $Y$, and suppose that $m K_X - \fu H$ is
	pseudo-effective for some $m \geq 1$. Then \conjectureref{conj:CP} predicts that 
	\begin{equation} \label{eq:conj-strong}
		\kappa(X) = \kappa(F) + \dim Y.
	\end{equation}
\end{lemma}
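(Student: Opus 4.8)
The plan is to prove the inequalities $\kappa(X)\leq\kappa(F)+\dim Y$ and $\kappa(X)\geq\kappa(F)+\dim Y$ separately. The first one is the easy addition formula of Iitaka, and it holds for \emph{every} algebraic fiber space, with no positivity hypothesis on $K_X$ at all; so the content of the lemma is the opposite inequality $\kappa(X)\geq\kappa(F)+\dim Y$, and for this I would use nothing beyond \conjectureref{conj:CP} itself. The idea is to exhibit an \emph{effective} divisor $D$ on $X$ with $\kappa(X,D)\geq\kappa(F)+\dim Y$ and with $m'K_X-D$ pseudo-effective for some $m'\geq 1$; then \conjectureref{conj:CP}, applied to this $D$ and $m'$, gives at once $\kappa(X)\geq\kappa(X,D)\geq\kappa(F)+\dim Y$.

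To construct $D$, I would first restrict $mK_X-\fu H$ to a general fiber $F$, which yields $mK_F$; thus $K_F$ is pseudo-effective, and granting the non-vanishing conjecture -- i.e.\ the case $D=0$ of \conjectureref{conj:CP} -- we get $\kappa(F)\geq 0$. Next I would invoke the classical fact that, after twisting by the pullback of an ample class from the base, a line bundle on the total space of a fiber space sees the full dimension of the base: for $k$ large enough,
\[
	\kappa\bigl( X, K_X + k \fu H \bigr) = \kappa(F, K_F) + \dim Y = \kappa(F) + \dim Y .
\]
(One way to see this: on the relative $\Proj$ over $Y$ of the algebra $\bigoplus_{j\geq 0} f_{\ast}\mathcal{O}_X(jK_X)$, which has relative dimension $\kappa(F)$ and hence total dimension $\kappa(F)+\dim Y$, the relative $\mathcal{O}(1)$ twisted by the pullback of $kH$ is ample once $k\gg 0$; nonemptiness of this $\Proj$ is exactly where $\kappa(F)\geq 0$ is used.) Fix such a $k$. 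Since the right-hand side is $\geq 0$, some multiple $\lvert q(K_X+k\fu H)\rvert$ is nonempty; pick such a $q\geq 1$ and let $D$ be a member of that linear system. Then $\kappa(X,D)=\kappa(X,K_X+k\fu H)=\kappa(F)+\dim Y$, while, with $m':=q(1+km)$,
\[
	m' K_X - D \equiv q k \bigl( m K_X - \fu H \bigr) ,
\]
which is pseudo-effective by hypothesis. Plugging $D$ and $m'$ into \conjectureref{conj:CP} finishes the proof.

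Almost everything here is formal once the displayed identity is in hand, so the only real issue is to quote that identity in the right generality -- an arbitrary line bundle on the total space of a fiber space, twisted by an ample pulled back from the base -- and to remember that one does genuinely need the non-vanishing conjecture to secure $\kappa(F)\geq 0$. I would double-check the precise reference for the Iitaka-dimension statement, but no deeper input is required. This argument is also a clean instance of the mechanism advertised in the introduction: the divisor $K_X+k\fu H$ automatically has the Iitaka dimension we are after, and the hypothesis that $mK_X-\fu H$ is pseudo-effective is precisely what keeps a multiple of it beneath a multiple of $K_X$, so that \conjectureref{conj:CP} can be brought to bear.
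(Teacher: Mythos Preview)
Your argument is correct and follows essentially the same route as the paper's proof. Both proofs first invoke the case $D=0$ of \conjectureref{conj:CP} to get $\kappa(F)\geq 0$, then use \cite[Prop.~1.14]{Mori} to compute the Iitaka dimension of a divisor of the shape $rK_X+\ell\fu H$ (you take $r=1$, the paper chooses $r$ so that $rK_F$ is already effective), check that a suitable multiple of $K_X$ minus this divisor is a nonnegative multiple of $mK_X-\fu H$, and feed the result back into \conjectureref{conj:CP}; the easy addition formula \cite[Cor.~2.3]{Mori} gives the reverse inequality. Your relative $\Proj$ sketch is one way to see the cited result, but since the paper simply quotes \cite[Prop.~1.14]{Mori}, you may as well do the same.
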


\begin{proof}
	Since $K_F$ is pseudo-effective, \conjectureref{conj:CP}, applied to the smooth
	projective variety $F$, predicts
	that $\kappa(F) \geq 0$. Choose an integer $r \geq 1$ such that $r K_F$ is
	effective. Then $\fl \OX(r
K_X)$ will be nonzero, and so $\fl \OX(r K_X) \tensor \OY(\ell H)$ will have sections
for $\ell \gg 0$. In other words, the divisor class $r K_X + \fu(\ell H)$ is effective. By
\cite[Prop.~1.14]{Mori}, we have
\[
	\kappa \bigl( X, rK_X + \fu(\ell H) \bigr) = \kappa(F) + \dim Y.
\]
The reason is that $rK_X + \fu(\ell H) - \fu H$ is effective once $\ell \gg
0$; and that the restriction of $rK_X + \fu(\ell H)$ to the general fiber $F$ is
equal to $rK_F$. But now
\[
	(m \ell + r) K_X - \bigl( r K_X + \fu(\ell H) \bigr)
		= \ell (m K_X - \fu H) 
\]
is of course still pseudo-effective, and so \conjectureref{conj:CP} is also claiming
that
\[
	\kappa(X) \geq \kappa \bigl( X, r K_X + \fu(\ell H) \bigr) 
			= \kappa(F) + \dim Y.
\]
On the other hand, one always has $\kappa(X) \leq \kappa(F) + \dim Y$ by the easy
addition formula \cite[Cor.~2.3]{Mori}, and hence \eqref{eq:conj-strong}.
\end{proof}

\newpar \label{par:Mori}
According to \cite[Prop.~1.14]{Mori}, the identity in \eqref{eq:conj-strong} is
equivalent to saying that $m K_X - \fu H$ becomes effective for $m$ sufficiently
large and divisible. The Campana-Peternell conjecture therefore looks exactly like
non-vanishing even in the general case: if $m K_X - \fu H$ is pseudo-effective
for some $m \geq 1$, then $m K_X - \fu H$ should actually be effective for $m$
sufficiently large and divisible.

\newpar
Still assuming that the non-vanishing conjecture is true, we can further reduce the problem
to algebraic fiber spaces with $\kappa(F) = 0$. The argument runs as follows. Suppose
that we have an algebraic fiber space $f \colon X \to Y$ with general fiber $F$, 
and an ample line bundle $H$ on $Y$, such that $m K_X - \fu H$ is
pseudo-effective for some $m \geq 1$.  As in the proof of \lemmaref{lem:fiber-space}, 
\[
	(m \ell + r) K_X - \bigl( r K_X + \fu (\ell H) \bigr)
\]
is pseudo-effective for suitably chosen $r,\ell \in \NN$. After another application
of the procedure in \parref{par:algorithm}, this time for the divisor class $rK_X +
\fu(\ell H)$, we get a new algebraic fiber space $f' \colon X' \to Y'$, with $X'$
birational to $X$, such that $\dim Y' = \kappa(F) + \dim Y$. We also get a new ample
divisor $H'$ on $Y'$ such that $m' K_{X'} - f'^{\ast} H'$ is pseudo-effective for
some $m' \geq 1$.  We can now iterate this procedure, and since 
\[
	\dim Y' = \dim Y + \kappa(F) \geq \dim Y, 
\]
we must eventually arrive at an algebraic fiber space $f^{(n)} \colon X^{(n)} \to
Y^{(n)}$ whose general fiber $F^{(n)}$ has Kodaira dimension $0$.

\newpar
In this manner, we reduce the proof of the Campana-Peternell conjecture, modulo the
non-vanishing conjecture, to the following special case:

\begin{conjecture} \label{conj:CP-strong}
	Let $f \colon X \to Y$ be an algebraic fiber space with $\kappa(F) \geq 0$.
	Let $H$ be an ample divisor on $Y$. If $m K_X - \fu H$ is
	pseudo-effective for some $m \geq 1$, then $mK_X - \fu H$ becomes effective for
	$m$ sufficiently large and divisible.
\end{conjecture}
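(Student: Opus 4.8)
The plan is to prove the equivalent identity $\kappa(X) = \kappa(F) + \dim Y$ from \lemmaref{lem:fiber-space}. The inequality $\kappa(X) \leq \kappa(F) + \dim Y$ is the easy addition theorem \cite[Cor.~2.3]{Mori}, so only the reverse inequality needs an argument. Working in the setting of \conjectureref{conj:CP-strong}, where the non-vanishing conjecture in lower dimensions is available, I would first carry out the reduction of \parref{par:Mori}: iterating the construction of \parref{par:algorithm} replaces $f$ by an algebraic fiber space whose general fiber has Kodaira dimension $0$, without changing $\kappa(X)$ or the hypothesis, and the process terminates. So from now on assume $\kappa(F) = 0$; the goal is $\kappa(X) \geq \dim Y$.

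The next step is to produce a line bundle on the base that controls $\kappa(X)$. Choose $b \geq 1$ with $h^0\bigl(F, \OX(bK_F)\bigr) = 1$. After a harmless further birational modification of $X$ and $Y$, the sheaf $\fl\OX(bK_{X/Y})$ is a line bundle $\OY(L)$, and the tautological section of $\OX(bK_{X/Y}) \tensor \fu\OY(-L)$ gives an effective divisor $R$ on $X$ with $bK_X \sim \fu(bK_Y + L) + R$ and $\fl\OX(R) = \OY$. Since $\fl$ is left exact and $kR$ is effective, $\OY = \fl\OX \into \fl\OX(kR)$, so the projection formula yields $h^0\bigl(X, \OX(kbK_X)\bigr) \geq h^0\bigl(Y, \OY(k(bK_Y + L))\bigr)$ for every $k \geq 1$, hence $\kappa(X) \geq \kappa(Y, bK_Y + L)$. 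It therefore suffices to show that $bK_Y + L$ is big on $Y$. Pseudo-effectivity of $L$ itself is classical (weak positivity of $\fl\OX(bK_{X/Y})$); to upgrade this to bigness of $bK_Y + L$ one must bring in the positivity of $m K_X - \fu H$, and this is where the singular metrics of P\u{a}un and Takayama \cite{PT} enter.

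The crucial and most delicate step is the following descent. Fix a singular metric with semipositive curvature on $\OX(m K_X - \fu H)$ (for $m$ large and divisible by $b$, which we may arrange since $m' K_X - \fu H$ stays pseudo-effective for all $m' \geq m$). Apply the P\u{a}un--Takayama positivity theorem to the twisted relative pluri-canonical sheaf $\fl\bigl(\OX(cK_{X/Y}) \tensor \OX(m K_X - \fu H)^{\tensor a}\bigr)$, for suitable $a,c \geq 1$ with $b \mid c + am$. Since $\kappa(F) = 0$ this sheaf has rank one, so it is a line bundle on $Y$ carrying a positively curved singular metric; unwinding $cK_{X/Y} + a(m K_X - \fu H) \sim (c+am)K_{X/Y} + \fu(am K_Y - aH)$ and pushing forward, this line bundle is $\OY\bigl(am K_Y - aH + L'\bigr)$ with $\OY(L') = \fl\OX\bigl((c+am)K_{X/Y}\bigr)$, and $L' - \tfrac{c+am}{b}L$ is effective. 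Feeding this back in, dividing through by $am$, and letting $m$ grow, one expects to deduce that $bK_Y + L - \tfrac1s H$ is pseudo-effective on $Y$ for $s = m/b \gg 0$. Granting this, $bK_Y + L = \bigl(bK_Y + L - \tfrac1s H\bigr) + \tfrac1s H$ is the sum of a pseudo-effective class and an ample class, hence big, and then $\kappa(X) \geq \kappa(Y, bK_Y + L) = \dim Y$, completing the proof.

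The hard part, and the reason for the word ``almost'' in the introduction, is to make this descent rigorous. Two correction terms must be controlled: the vertical part of $R$, which should be absorbed by the rigidity of vertical divisors, and the horizontal discrepancy $L' - \tfrac{c+am}{b}L$ (equivalently, the behavior of the horizontal part of $R$ in higher relative pluri-canonical degrees), which measures the failure of the relative pluri-canonical ring to be generated in degree $b$ and cannot be removed for free. Dealing with it requires controlling the general fiber $F$: the descent goes through cleanly when the base $Y$ is not uniruled, since then $K_Y$ is pseudo-effective by \cite{BDPP} and the correction terms can be absorbed, whereas if $Y$ is uniruled one passes to the maximal rationally connected fibration with target not uniruled and recurses -- the general fiber over the new base then fibers over the rationally connected fiber of that map, so one is reduced to the case that $Y$ itself is rationally connected. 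That case, where $K_Y$ is not pseudo-effective and the induced metric on $L$ can be trivial, is the one I do not know how to settle; I expect it too can be handled, which would make \conjectureref{conj:CP} genuinely equivalent to the non-vanishing conjecture.
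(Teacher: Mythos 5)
The statement you are trying to prove is \conjectureref{conj:CP-strong}, which the paper does \emph{not} prove: it is left as an open conjecture, and the paper only establishes the special case where $K_Y$ is pseudo-effective (\theoremref{thm:main}), plus a reduction of the remaining case to rationally connected base. Your proposal ends in the same place -- you explicitly concede that the rationally connected case ``is the one I do not know how to settle'' -- so it is not a proof of the statement, and you should say so rather than present it as one. Beyond that, the part you do sketch has a genuine unproved step: the ``descent'' asserting that positivity of $m K_X - \fu H$ forces $b K_Y + L - \tfrac1s H$ to be pseudo-effective is only claimed (``one expects to deduce\dots Granting this''), and as you yourself note, the comparison between $L' = \fl \OX((c+am)K_{X/Y})$ and multiples of $L$ goes the wrong way: $L' - \tfrac{c+am}{b}L$ effective lets you bound $L'$ from below by $L$, whereas concluding pseudo-effectivity of $bK_Y + L - \tfrac1s H$ from that of $amK_Y - aH + L'$ would require a bound in the opposite direction. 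There is also a smaller slip at the start: \conjectureref{conj:CP-strong} does not grant you the non-vanishing conjecture in lower dimensions (the paper invokes non-vanishing \emph{before} stating the conjecture, precisely so that the conjecture itself can be stated with $\kappa(F)\geq 0$), so the reduction to $\kappa(F)=0$ is not available ``for free'' inside the conjecture, and moreover the rank-one pushforward you want to endow with a positively curved metric requires the $L^2$/multiplier-ideal nontriviality hypothesis of P\u{a}un--Takayama, which you do not verify.

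For comparison, the paper's partial result proceeds differently even in the case $K_Y$ pseudo-effective: instead of descending bigness of $bK_Y+L$ to the base, it writes $\fl\OX(mrK_X - \fu H)$ as $\fl\OX(K_X+L_n)\tensor\OY(kH)\tensor\OY(nK_Y+\ell H)$ with $L_n = nK_{X/Y}+L$ and $L=(k+\ell+1)(m_0K_X-\fu H)$, shows via Skoda's lemma that $\shI(h_F^{1/n})=\shO_F$ for $n\gg 0$ so that the P\u{a}un--Takayama metric $h_n$ on $L_n$ has $\fl(\OX(K_X+L_n)\tensor\shI(h_n))$ generically equal to $\fl\OX(K_X+L_n)$, and then produces sections directly by combining the Fujino--Matsumura Koll\'ar-type vanishing theorem with an effective non-vanishing argument (Riemann--Roch) on $Y$, using pseudo-effectivity of $K_Y$ only to make $nK_Y+\ell H$ effective. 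If you want to salvage your write-up, the honest framing is: your approach is an alternative (unfinished) route to \theoremref{thm:main}, with the descent step still to be made rigorous, and the conjecture itself remains open exactly where the paper says it does.
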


We have seen above that \conjectureref{conj:CP-strong}, together with the
non-vanishing conjecture, is equivalent to \conjectureref{conj:CP}. Of course, I have
no idea of how to prove the non-vanishing conjecture; my point is that
\conjectureref{conj:CP-strong} is the part of the Campana-Peternell conjecture that
one can hope to solve using existing methods.

\newpar
As the discussion in \parref{par:strong} shows, the Campana-Peternell conjecture
also has implications for Iitaka's conjecture on subadditivity of the Kodaira
dimension. Let $f \colon X \to Y$ be an algebraic fiber space with general fiber $F$.
The Campana-Peternell conjecture reduces the problem of proving the inequality 
\[
	\kappa(X) \geq \kappa(F) + \kappa(Y)
\]
for arbitrary algebraic fiber spaces to the special case $\kappa(Y) = 0$. 

\begin{proposition} \label{prop:Iitaka}
Suppose that \conjectureref{conj:CP-strong} is true. If the inequality
\[
	\kappa(X) \geq \kappa(F) + \kappa(Y) 
\]
holds for every algebraic fiber space $f \colon X \to Y$ with $\kappa(Y) = 0$,
then it holds for every algebraic fiber space.
\end{proposition}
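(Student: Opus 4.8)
The plan is to compose $f$ with the Iitaka fibration of the base $Y$ and then to apply \conjectureref{conj:CP-strong} to the resulting algebraic fiber space over the base of that Iitaka fibration. First I would dispose of the trivial cases: if $\kappa(F) = -\infty$ or $\kappa(Y) = -\infty$, then the asserted inequality reads $\kappa(X) \geq -\infty$ and there is nothing to prove, while if $\kappa(Y) = 0$ the inequality is exactly the hypothesis. So from now on I assume $\kappa(F) \geq 0$ and $\kappa(Y) \geq 1$.

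After replacing $X$ and $Y$ by suitable smooth projective birational models (which changes neither $\kappa(X)$ nor the birational class of the general fiber $F$), I may assume that $f$ is a morphism and that the Iitaka fibration of $Y$ is realized by a morphism $g \colon Y \to Z$ onto a smooth projective variety $Z$ with $\dim Z = \kappa(Y)$, with connected fibers, and with general fiber $G$ satisfying $\kappa(G) = 0$. Set $h = g \circ f \colon X \to Z$; this is again an algebraic fiber space, and its general fiber $X_G = h^{-1}(z) = f^{-1}(G)$ is a smooth projective variety on which $f$ restricts to an algebraic fiber space $X_G \to G$ with general fiber $F$. Since $\kappa(G) = 0$, the hypothesis of the proposition, applied to $X_G \to G$, gives
\[
	\kappa(X_G) \;\geq\; \kappa(F) + \kappa(G) \;=\; \kappa(F) \;\geq\; 0,
\]
so in particular the general fiber of $h$ has non-negative Kodaira dimension, as required by \conjectureref{conj:CP-strong}.

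The crux of the argument is to exhibit an ample divisor $H$ on $Z$ and an integer $m \geq 1$ with $m K_X - \hu H$ pseudo-effective. Since $\kappa(Y) = \dim Z \geq 0$, standard properties of the Iitaka fibration allow me to arrange, after a further birational modification as in \parref{par:algorithm} and after enlarging $m$, that $m K_Y \equiv \gu A + N$ with $N \geq 0$ effective and $A = H + (\text{effective})$ for some ample divisor $H$ on $Z$. Using $m K_X = m K_{X/Y} + \fu(m K_Y)$ together with $\fu \gu = \hu$, this gives
\[
	m K_X - \hu H \;\equiv\; m K_{X/Y} + \fu N + \hu\bigl( A - H \bigr),
\]
and the last two terms are effective, so it is enough to know that the relative canonical class $K_{X/Y}$ is pseudo-effective. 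This is exactly where the reduction $\kappa(F) \geq 0$ is used: because the general fiber of $f$ has non-negative Kodaira dimension, $\fl \omega_{X/Y}^{\tensor r}$ is a nonzero sheaf on $Y$ for some $r \geq 1$, and Viehweg's weak positivity theorem for direct images of relative pluricanonical sheaves then implies that $K_{X/Y}$ is pseudo-effective.

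Finally I would assemble the pieces. Applying \conjectureref{conj:CP-strong} to the algebraic fiber space $h \colon X \to Z$ --- whose general fiber $X_G$ has $\kappa(X_G) \geq 0$ and for which $m K_X - \hu H$ is pseudo-effective --- shows that $m K_X - \hu H$ becomes effective for $m$ sufficiently large and divisible. Since $H$ is ample on $Z$ and $m K_X\restr{X_G} = m K_{X_G}$, the additivity result \cite[Prop.~1.14]{Mori}, which was already used in the proof of \lemmaref{lem:fiber-space}, applied with $D = m K_X$ gives
\[
	\kappa(X) \;=\; \kappa(X_G) + \dim Z.
\]
Since $\dim Z = \kappa(Y)$ and $\kappa(X_G) \geq \kappa(F)$, this yields $\kappa(X) \geq \kappa(F) + \kappa(Y)$, as desired. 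I expect the main obstacle to be the pseudo-effectivity of $m K_X - \hu H$, which rests on the pseudo-effectivity of the relative canonical class $K_{X/Y}$; once that is granted, the remainder is a formal combination of the structure theory of the Iitaka fibration, \conjectureref{conj:CP-strong}, and \cite[Prop.~1.14]{Mori}.
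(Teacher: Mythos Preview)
Your argument is correct and follows essentially the same route as the paper: compose $f$ with the Iitaka fibration of $Y$, use $\kappa(F)\geq 0$ to make $K_{X/Y}$ pseudo-effective and hence $mK_X-\hu H$ pseudo-effective, apply the $\kappa=0$ hypothesis to the fiber over the Iitaka base to get $\kappa(X_G)\geq\kappa(F)$, and then invoke \conjectureref{conj:CP-strong} together with \cite[Prop.~1.14]{Mori} to conclude $\kappa(X)=\kappa(X_G)+\dim Z$. The only differences are notational (the paper writes $p,P,g,G$ where you write $g,G,h,X_G$) and in the order of the steps; your explicit attribution of the pseudo-effectivity of $K_{X/Y}$ to Viehweg's weak positivity is what the paper leaves implicit.
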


\begin{proof}
We can obviously assume that $\kappa(F) \geq 0$ and $\kappa(Y) \geq 0$, because the
statement is vacuous otherwise. After a birational modification, we can assume that
the Iitaka fibration of $Y$ is a morphism $p \colon Y \to Z$, and that $m K_Y - \pu
H$ is effective for some $m \geq 1$ and some ample divisor $H$ on $Z$. Let $P$ be
the general fiber of $p$; by construction, one has $\kappa(P) = 0$. Set $g = p \circ
f$, and denote by $G$ the general fiber of the resulting algebraic fiber space $g
\colon X \to Z$.
\[
\begin{tikzcd}
X \arrow[bend right=40]{rr}{g} \rar{f} & Y \rar{p} & Z
\end{tikzcd}
\]
Since $\kappa(F) \geq 0$, the relative canonical class $K_{X/Y}$ is pseudo-effective;
consequently, 
\[
	m K_X - \gu H \equiv m K_{X/Y} + \fu(m K_Y - \pu H)
\]
is also pseudo-effective. But then \eqref{eq:conj-strong} gives $\kappa(X) = \kappa(G) +
\dim Z = \kappa(G) + \kappa(Y)$. The conclusion is that if one knew subadditivity for
the algebraic fiber space $f \colon G \to P$, whose general fiber is $F$ and whose base $P$
satisfies $\kappa(P) = 0$, then one would get $\kappa(G) \geq \kappa(F)$, and
therefore the desired subadditivity for $f \colon X \to Y$.
\end{proof}

\section{The main result}

\newpar
We are going to show that \conjectureref{conj:CP-strong} is true under the additional
(but hopefully unnecessary) assumption  that the canonical divisor $K_Y$ is
pseudo-effective.

\begin{theorem} \label{thm:main}
	Let $f \colon X \to Y$ be an algebraic fiber space with $\kappa(F) \geq 0$.
	Let $H$ be an ample divisor on $Y$. If $m K_X - \fu H$ is
	pseudo-effective for some $m \geq 1$, then $mK_X - \fu H$ becomes effective for
	$m$ sufficiently large and divisible, provided that $K_Y$ is pseudo-effective.
\end{theorem}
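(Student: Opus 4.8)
The plan is to bootstrap the pseudo-effectivity of $mK_X - \fu H$ into a genuine global section by means of the positivity of direct image sheaves, using the work of P\u{a}un and Takayama to produce a \emph{strictly} positively curved sheaf on $Y$. First note that, since $\fu H$ is nef, the hypothesis forces $K_X$ to be pseudo-effective, and in fact the $\QQ$-divisor class $K_X - \tfrac1m\fu H$ to be pseudo-effective; moreover replacing $m$ by any multiple keeps $mK_X - \fu H$ pseudo-effective, so we may take $m$ as divisible as we wish. Fix a K\"ahler form $\omega_H$ on $Y$ representing $c_1(H)$, choose a large integer $s$ (how large will depend only on $Y$ and $H$), and set $N = sm$, so that $N$ is divisible enough for $H^0\bigl(F, NK_F\bigr) \neq 0$.

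The heart of the argument is the following construction. Write $\OX(NK_X) = \omega_{X/Y} \tensor L$ with $L = \OX\bigl((N-1)K_X + \fu K_Y\bigr)$, and equip $L$ with a singular hermitian metric $h_L$ whose curvature current satisfies $i\Theta_{h_L}(L) \geq (s-1)\,\fu\omega_H$. To build $h_L$, use the decomposition
\[
	(N-1)K_X + \fu K_Y \;\equiv\; \bigl[ (N-1)K_X - (s-1)\fu H \bigr] \;+\; (s-1)\fu H \;+\; \fu K_Y :
\]
the bracketed class equals $(N-1)\bigl(K_X - \tfrac1m\fu H\bigr) + \tfrac{m-1}{m}\fu H$ and is therefore pseudo-effective, so put a singular metric with semi-positive curvature on it; put a smooth metric with curvature $(s-1)\fu\omega_H$ on the middle term; and use that $K_Y$ is pseudo-effective to put a semi-positively curved metric on $\fu K_Y$. (This is the only place where the hypothesis on $K_Y$ enters.) Now apply the theorem of Berndtsson and P\u{a}un on the positivity of $\fl(\omega_{X/Y} \tensor L)$, in the form extended to singular metrics by P\u{a}un and Takayama: the relevant $L^2$-direct image $\mathcal{G}$, a nonzero torsion-free subsheaf of $\fl\OX(NK_X)$, carries a singular hermitian metric with Nakano curvature $\geq (s-1)\,\omega_H \tensor \id_{\mathcal{G}}$ -- the fibrewise positivity of $h_L$ along $Y$ is transported, after integration over the fibres, to strict positivity on the base. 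That $\mathcal{G} \neq 0$ rests on $\kappa(F) \geq 0$ -- which makes $H^0(F, NK_F) \neq 0$ for $N$ divisible -- together with a choice of the metrics above making the multiplier ideals along the general fibre trivial, which one arranges by a preliminary log-resolution.

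Granting the sheaf $\mathcal{G}$, the conclusion follows by a vanishing argument. The twist $\mathcal{G} \tensor \OY(-H)$ carries a metric with Nakano curvature $\geq (s-2)\,\omega_H \tensor \id$; absorbing a fixed smooth metric on $\omega_Y^{-1}$, one makes the curvature of $\mathcal{G} \tensor \OY(-jH) \tensor \omega_Y^{-1}$ strictly Nakano positive for $j = 1, 2$ once $s$ exceeds a bound depending only on $(Y, \omega_Y, H)$. A Nakano-type vanishing theorem valid for these singular metrics then gives $H^i\bigl(Y, \mathcal{G} \tensor \OY(-jH)\bigr) = 0$ for all $i \geq 1$ and $j = 1, 2$ -- with no restriction on $\rk\mathcal{G}$, which is why one wants Nakano rather than merely Griffiths positivity. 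Cutting with a general member of $\abs{H}$ and inducting on $\dim Y$ now produces a nonzero section of $\mathcal{G} \tensor \OY(-H)$; composing the inclusion $\mathcal{G} \into \fl\OX(NK_X)$ with the projection formula gives
\[
	0 \neq H^0\bigl(Y, \mathcal{G} \tensor \OY(-H)\bigr) \subseteq H^0\bigl(Y, \fl\OX(NK_X) \tensor \OY(-H)\bigr) = H^0\bigl(X, NK_X - \fu H\bigr).
\]
Hence $NK_X - \fu H$ is effective; since the admissible $N = sm$ are cofinal among large divisible multiples of $m$, this proves \theoremref{thm:main}, and by \cite[Prop.~1.14]{Mori} it amounts to the equality $\kappa(X) = \kappa(F) + \dim Y$.

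The main obstacle is the middle step -- extracting from the possibly very singular metric $h_L$ a \emph{nonzero} direct image carrying a \emph{Nakano} metric with \emph{strictly} positive curvature. This is exactly what the P\u{a}un--Takayama machinery is built to deliver, but deploying it cleanly demands the usual care: checking that the fibrewise $L^2$-condition is non-vacuous (so that $\mathcal{G} \neq 0$), tracking multiplier ideals along the fibres of $f$, and controlling the $L^2$-metric under restriction to $\abs{H}$. Everything else -- the bookkeeping with pseudo-effective classes that produces the coefficient $s-1$ in front of $\fu H$, the vanishing, and the dimension induction -- is routine once $\mathcal{G}$ is in hand.
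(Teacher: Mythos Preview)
Your strategy---bootstrap pseudo-effectivity into a section via P\u{a}un--Takayama positivity of direct images---is the paper's strategy too, but the execution has two genuine gaps, and the paper's argument differs precisely at those points.

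\textbf{Nakano vanishing is not available here.} What P\u{a}un--Takayama \cite{PT} actually prove is that the $L^2$ direct image carries a singular hermitian metric that is semi-positive in the \emph{Griffiths} sense (defined by requiring $\log\norm{u}_{h^\ast}$ to be plurisubharmonic for local sections $u$ of the dual). For singular metrics on higher-rank torsion-free sheaves there is no curvature tensor, hence no workable Nakano condition, and in particular no vanishing theorem yielding $H^i(Y,\mathcal{G}\otimes\OY(-jH))=0$. Berndtsson's Nakano statement needs a smooth $h_L$ and a smooth fibration. The paper avoids higher rank entirely: it uses P\u{a}un--Takayama only to put a semi-positive singular metric $h_n$ on the \emph{line bundle} $L_n = nK_{X/Y}+L$, and then applies the Koll\'ar-type vanishing of Fujino--Matsumura \cite{FM} to $\fl\bigl(\OX(K_X+L_n)\otimes\shI(h_n)\bigr)$, which only needs a singular metric on a line bundle.

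\textbf{Why should $\mathcal{G}$ be nonzero?} Your metric on the pseudo-effective class $(N-1)K_X-(s-1)\fu H$ grows \emph{more} singular as $N=sm$ increases, so there is no reason for $\shI(h_L\restr{F})$ to be $\shO_F$, or even to leave any section of $NK_F$ alive. A ``preliminary log-resolution'' does not help: a class that is merely pseudo-effective (not big) need not carry any metric with analytic singularities, so there is nothing to resolve. The paper's decomposition is arranged to \emph{dilute} the singularity instead. One fixes once and for all a metric $h$ on $L=(k+\ell+1)(m_0 K_X-\fu H)$, and the P\u{a}un--Takayama (twisted Narasimhan--Simha) metric on $L_n=nK_{X/Y}+L$ then involves on the general fibre only the scaled metric $h_F^{1/n}$, whose multiplier ideal is $\shO_F$ for $n\gg 0$ by Skoda's lemma. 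This $1/n$ scaling is the mechanism by which adding copies of $K_X$ actually helps, and it is missing from your setup.

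For comparison, the paper's decomposition reads
\[
mrK_X - \fu H \;=\; \bigl(K_X + nK_{X/Y} + L\bigr) \;+\; \fu(kH) \;+\; \fu(nK_Y + \ell H),
\]
with $L$ fixed and $n$ growing. The hypothesis that $K_Y$ is pseudo-effective enters not as a metric ingredient but only to make $nK_Y+\ell H$ effective for every $n$, via ordinary effective non-vanishing on $Y$; the factor $\fu(kH)$ is then absorbed by an effective non-vanishing lemma for direct images, whose proof is exactly Fujino--Matsumura vanishing plus Riemann--Roch.
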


An equivalent formulation is that, under the assumptions of the theorem, 
\[
	\kappa(X) = \kappa(F) + \dim Y.
\]
This is clear from the discussion in \parref{par:Mori}.

\newpar
The strategy of the proof is as follows. Fix an integer $r \geq 1$ such that $r K_F$
is effective. Suppose that $m_0 K_X - \fu H$ is pseudo-effective for some $m_0 \geq
1$. Then $L = (k+\ell+1)(m_0 K_X - \fu H)$ is of course also pseudo-effective for
$k,\ell \geq 1$, and by putting things together in the right way, we get
\begin{equation} \label{eq:factors}
	\fl \OX \bigl( m r K_X - \fu H \bigr) \cong 
	\fl \OX \bigl( K_X + L_n \bigr) \tensor \OY(kH) \otimes \OY(n K_Y + \ell H),
\end{equation}
where $n = mr - (k+\ell+1)m_0 - 1$ and $L_n = nK_{X/Y} + L$. Since $mr K_F$ is effective,
\[
	\fl \OX \bigl( K_X + L_n \bigr) = \fl \OX \bigl( K_X + n K_{X/Y} + L \bigr)
\]
is a torsion-free sheaf on $Y$, of generic rank $\dim H^0(F, mr K_F)$. By the work of
P\u{a}un and Takayama \cite{PT}, the line bundle $L_n$ has a naturally defined
semi-positively curved singular hermitian metric $h_n$, and for $m \gg 0$, the inclusion
\[
	\fl \bigl( \OX(K_X + L_n) \tensor \shI(h_n) \bigr) \subseteq \fl \OX(K_X + L_n)
\]
becomes generically an isomorphism. We can then use the Koll\'ar-type vanishing
theorem for the sheaf $\fl \bigl( \OX(K_X + L_n) \tensor \shI(h_n) \bigr)$, proved by
Fujino and Matsumura \cite{FM}, to get the desired conclusion.

\newpar
Now we turn to the details. Let us first deal with the factor $\OY(nK_Y
+ \ell H)$ in \eqref{eq:factors}. Since $K_Y$ is pseudo-effective, we can choose
$\ell \geq 1$ in such a
way that the divisor $nK_Y + \ell H$ is effective for every $n \geq 1$; this kind of
result is sometimes called ``effective non-vanishing'', and is a simple consequence
of the Riemann-Roch theorem and the Kawamata-Viehweg vanishing theorem
\cite[11.2.14]{Lazarsfeld}.

\newpar
To deal with the remaining factors, we need a version of effective non-vanishing for
direct images of adjoint bundles. Here is the precise statement.

\begin{lemma} \label{lem:effective-nonvanishing}
	Let $f \colon X \to Y$ be an algebraic fiber space. Let $H$ be an ample divisor on
	$Y$. Then there is an integer $k \geq 1$ such that
	\[
		H^0 \bigl( Y, \fl \OX(K_X + L)  \tensor \OY(kH) \bigr) \neq 0
	\]
	for every line bundle $(L, h_L)$ with a semi-positively curved singular hermitian
	metric on $X$ such that $\fl \bigl( \OX(K_X + L) \tensor \shI(h_L) \bigr) \neq 0$.
\end{lemma}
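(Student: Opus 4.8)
The plan is to produce, with $k$ depending only on $Y$ and $H$, a nonzero global section of the sheaf $\mathcal{F} \tensor \OY(kH)$, where $\mathcal{F} = \fl\bigl(\OX(K_X+L)\tensor\shI(h_L)\bigr) = \fl\bigl(\omega_X \tensor \OX(L) \tensor \shI(h_L)\bigr)$; by hypothesis this is a nonzero coherent sheaf on $Y$. The point to keep in mind is that the \emph{uniformity} of $k$ is the whole content of the lemma: a naive Riemann--Roch argument would give $H^0\bigl(Y, \mathcal{F} \tensor \OY(kH)\bigr) \neq 0$ only for $k$ larger than a bound depending on the Hilbert polynomial of $\mathcal{F}$, hence on $(L, h_L)$. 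To get a uniform bound one instead aims for \emph{global generation} of $\mathcal{F} \tensor \OY(kH)$, which kicks in as soon as a cohomology-vanishing range is reached, and that range can be made independent of $(L, h_L)$.

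The two ingredients are the following. First, the Koll\'ar-type vanishing theorem of Fujino and Matsumura \cite{FM}: because $(L, h_L)$ carries a semi-positively curved singular hermitian metric, one has $H^i\bigl(Y, \mathcal{F} \tensor \OY(A)\bigr) = 0$ for every $i > 0$ and every ample divisor $A$ on $Y$, and -- this is the key feature -- the vanishing holds simultaneously for all admissible pairs $(L, h_L)$. Second, Castelnuovo--Mumford regularity. Fix once and for all an integer $d \geq 1$, depending only on $(Y, H)$, such that $dH$ is very ample, and set $k = d \dim Y + 1$. For $1 \leq i \leq \dim Y$ the divisor $(k - di)H = \bigl(d(\dim Y - i) + 1\bigr)H$ is a positive multiple of $H$, so the first ingredient gives $H^i\bigl(Y, (\mathcal{F} \tensor \OY(kH)) \tensor \OY(-diH)\bigr) = 0$, while for $i > \dim Y$ this vanishes for dimension reasons. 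Hence $\mathcal{F} \tensor \OY(kH)$ is $0$-regular with respect to the very ample bundle $\OY(dH)$, and is therefore globally generated. Since $\mathcal{F} \neq 0$ and a nonzero globally generated sheaf has a nonzero section, we conclude that $H^0\bigl(Y, \mathcal{F} \tensor \OY(kH)\bigr) \neq 0$ with $k = d \dim Y + 1$.

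The main -- in fact essentially the only -- point requiring care is to confirm that the Fujino--Matsumura vanishing theorem applies in precisely the form used above: for $\fl = R^0 f_*$ of $\omega_X \tensor \OX(L) \tensor \shI(h_L)$, with $f$ a surjective morphism of smooth projective varieties, twisted on $Y$ by an arbitrary ample line bundle; some formulations phrase the hypothesis on the twist as ``nef and big'', which is still satisfied by any positive multiple of the ample $H$, so this is not a genuine restriction. It is also worth noting explicitly that no use is made of torsion-freeness of $\mathcal{F}$ or of its generic rank: the argument works verbatim even if $\mathcal{F}$ happens to be supported on a proper subvariety of $Y$, since $0$-regularity still forces global generation, and a single nonzero section is all that \lemmaref{lem:effective-nonvanishing} asks for. (One could alternatively run the argument using the positively curved singular metric on $\mathcal{F}$ furnished by the theory of P\u{a}un and Takayama \cite{PT} together with an effective global-generation statement for direct images, but the route through \cite{FM} is the most direct.)
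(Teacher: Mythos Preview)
Your proof is correct, and it shares the crucial ingredient with the paper's argument: the Koll\'ar-type vanishing theorem of Fujino--Matsumura \cite{FM} for the sheaf $\shF = \fl \bigl( \OX(K_X + L) \tensor \shI(h_L) \bigr)$, which is what makes the bound on $k$ independent of $(L,h_L)$. The finishing step, however, is different. You invoke Castelnuovo--Mumford regularity with respect to a very ample multiple $dH$ to obtain global generation of $\shF \tensor \OY(kH)$ at $k = d\dim Y + 1$. The paper instead observes that, once the higher cohomology of $\shF \tensor \OY(kH)$ vanishes for all $k \geq 1$, the function $k \mapsto h^0 \bigl( Y, \shF \tensor \OY(kH) \bigr) = \chi \bigl( Y, \shF \tensor \OY(kH) \bigr)$ is a nonzero polynomial of degree at most $\dim Y$, hence takes a nonzero value for some $k \in \{1,\dotsc,\dim Y+1\}$; after first replacing $H$ by an effective multiple, one can then pass to $k = \dim Y + 1$ uniformly. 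So your remark that ``a naive Riemann--Roch argument'' would give only an $(L,h_L)$-dependent bound is a bit too pessimistic: the vanishing identifies $h^0$ with $\chi$ on the whole range $k \geq 1$, and the degree bound on the polynomial is exactly what yields uniformity. Your route has the advantage of producing global generation rather than a single section, at the modest cost of passing to a very ample multiple and a somewhat larger $k$; the paper's route is marginally more elementary and gives a slightly sharper constant.
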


\begin{proof}
	After replacing $H$ by a sufficiently big multiple, we can assume that $H$ is
	effective to begin with. To shorten the formulas, define
	\[
		\shF = \fl \bigl( \OX(K_X + L) \tensor \shI(h_L) \bigr).
	\]
	This is a nontrivial torsion-free coherent sheaf on $Y$. According to the
	Koll\'ar-type vanishing theorem proved by Fujino and Matsumura
	\cite[Thm.~D]{FM}, we have
	\[
		H^i \bigl( Y, \shF \tensor \OY(kH) \bigr) = 0
	\]
	for every $i,k \geq 1$. Consequently, 
	\[
		\dim H^0 \bigl( Y, \shF \tensor \OY(kH) \bigr) = 
		\chi \bigl( Y, \shF \tensor \OY(kH) \bigr)
	\]
	is a nonzero polynomial in $k$, of degree at most $\dim Y$, hence must be nonzero for
	at least one value of $k \in \{1, 2, \dotsc, \dim Y + 1\}$. In fact, because $H$ is
	effective, the value $k = \dim Y + 1$ always works. Now it remains to observe that
	\[
		H^0 \bigl( Y, \shF \tensor \OY(kH) \bigr) \subseteq
		H^0 \bigl( Y, \fl \OX(K_X + L) \tensor \OY(kH) \bigr),
	\]
	due to the inclusion $\shI(h_L) \subseteq \OX$.
\end{proof}

\newpar
We return to our problem. Fix integers $m_0, k, \ell \geq 1$ as above. The line
bundle $L = (k+\ell+1)(m_0 K_X - \fu H)$ is pseudo-effective,
and so it has a singular hermitian metric $h$ with semi-positive curvature
\cite[Prop.~6.6]{Demailly}. 
Denote by $(L_F, h_F)$ the restriction of $(L,h)$ to the general fiber $F$ of the
algebraic fiber space $f \colon X \to Y$.

\begin{lemma}
	For $n \gg 0$, we have $\shI(h_F^{1/n}) = \shO_F$.
\end{lemma}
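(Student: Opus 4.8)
The plan is to reduce to a statement about a \emph{fixed} metric on a \emph{fixed} line bundle (divided by $n$), and then invoke the standard fact that multiplier ideals of $h^{1/n}$ stabilize to the structure sheaf. First I would recall that $L = (k+\ell+1)(m_0 K_X - \fu H)$, so $L_F = (k+\ell+1)(m_0 K_F)$ is a fixed line bundle on $F$, independent of $n$; likewise the singular metric $h_F$ with semi-positive curvature is fixed. We are then asking about the family of multiplier ideal sheaves $\shI(h_F^{1/n})$. The key point is that for a fixed psef line bundle $(L_F, h_F)$ on a smooth projective variety, $\shI(h_F^{1/n})$ is an increasing sequence of ideal sheaves (the local weights $\varphi/n$ decrease pointwise as $n$ grows), and by Noetherianity it stabilizes; I need to show the stable value is all of $\shO_F$.

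The cleanest way to see that is an Ohsawa-Takegoshi / Skoda-type argument, but the simplest is local: fix a point $x \in F$ and a local psh weight $\varphi$ for $h_F$ near $x$. The Lelong number of $\tfrac1n\varphi$ at every point is $\tfrac1n$ times that of $\varphi$, hence tends to $0$ as $n \to \infty$; once the Lelong number is $< 1$ at a point (in fact $< 2 = \dim$, but pointwise $<1$ suffices near a point for the germ to be in the multiplier ideal by Skoda's lemma), the function $e^{-\varphi/n}$ is locally integrable there. Since $F$ is compact and the Lelong numbers of $\varphi$ are bounded (they are bounded by the generic Lelong number along any analytic set, ultimately by the mass of the curvature current), there is a uniform $n_0$ past which $e^{-\varphi/n}$ is locally $L^1$ everywhere on $F$, i.e. $\shI(h_F^{1/n}) = \shO_F$. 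Concretely one can cite Demailly's \cite[Prop.~6.6 and the discussion of multiplier ideals]{Demailly} or Lazarsfeld \cite{Lazarsfeld}; the statement ``$\shI(\varphi/n) = \shO$ for $n$ large'' for a fixed quasi-psh $\varphi$ is entirely standard.

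I expect the only mildly delicate point to be confirming the \emph{uniformity} in $x$: that a single $n_0$ works at all points of $F$ simultaneously, rather than an $n$ depending on the point. This follows from compactness of $F$ together with the fact that the Lelong numbers of the curvature current of $h_F$ are uniformly bounded above (for instance by the cohomology class $c_1(L_F)$ paired with a fixed ample class, via the Siu-type semicontinuity and mass estimates for closed positive currents). One subtlety worth a sentence in the writeup: $h_F$ is a priori only defined on the \emph{general} fiber, obtained by restricting the global metric $h$ on $L$; since the restriction of a psh weight to a general fiber stays psh (by Fubini, the restriction is not identically $-\infty$ and is psh away from a measure-zero set, hence extends), $h_F$ is a genuine semi-positively curved singular metric and the above applies. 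The remaining manipulations are routine.
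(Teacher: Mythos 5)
Your argument is correct and is essentially the paper's own proof: write $h_F$ locally as $e^{-\varphi}$ with $\varphi$ plurisubharmonic, note that the Lelong numbers of $\varphi/n$ tend to $0$, apply Skoda's integrability criterion, and use compactness of $F$ (finitely many charts, bounded Lelong numbers) for uniformity in $n$. The extra remarks on Noetherian stabilization and on restriction to the general fiber are fine but not needed beyond what the paper already does.
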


\begin{proof}
	Since $F$ is compact, we can cover $F$ by finitely many open sets on which $L_F$ is
	trivial. In any trivialization, the singular hermitian metric $h_F$ can be written
	in the form $e^{-\varphi}$, with $\varphi$ plurisubharmonic. This reduces the
	problem to the following local statement: Let $\varphi$
	be a plurisubharmonic function on a neighborhood of $0 \in \CC^d$, not
	identically equal to $-\infty$; then the function $e^{-\varphi/n}$
	is locally integrable for $n \gg 0$. This is straightforward. The Lelong
	number $\nu(\varphi,0)$ is finite by construction; see for example
	\cite[Thm.~2.8]{Demailly}. But by a result of Skoda \cite[Lem.~5.6]{Demailly}, the
	function $e^{-\varphi/n}$ becomes locally integrable once $2n > \nu(\varphi,0)$.
\end{proof}

\newpar \label{par:PT}
Choose $m$ sufficiently large so that $\shI(h_F^{1/n}) = \shO_F$; recall from the
construction above that $n = rm - (k+\ell+1)m_0 -1$. Since we know that
\[
	\fl \OX \bigl( K_X + L_n \bigr) = \fl \OX \bigl( K_X + n K_{X/Y} + L \bigr) 
\]
is nontrivial, we can now apply \cite[Thm.~5.1.2]{PT}. During the proof of this
result, P\u{a}un and Takayama show that the twisted Narasimhan-Simha metric on the
fibers of $f \colon X \to Y$ induces a semi-positively curved singular hermitian
metric $h_n$ on the line bundle $L_n = n K_{X/Y} + L$, and that the inclusion
\[
	\fl \bigl( \OX(K_X + L_n) \tensor \shI(h_n) \bigr)
		\subseteq \fl \OX(K_X + L_n)
\]
is generically an isomorphism; this is a consequence of the fact that
$\shI(h_F^{1/n}) = \shO_F$. Together with \lemmaref{lem:effective-nonvanishing},
these two facts imply that
\[
	H^0 \bigl( Y, \fl \OX(K_X + L_n) \tensor \OY(kH) \bigr) \neq 0.
\]
Since $nK_Y + \ell H$ is effective, we conclude from \eqref{eq:factors} that
$\fl \OX \bigl( mr K_X - \fu H \bigr)$ has nontrivial global sections for $m
\gg 0$, and hence that $mr K_X - \fu H$ becomes effective for $m \gg 0$. This is what
we wanted to show.

\newpar
Here is an example where this gives a new result. 

\begin{example}
	Let $f \colon X \to A$ be an algebraic fiber space over an abelian variety, with
	$\kappa(F) \geq 0$. If $m K_X - \fu H$ is pseudo-effective for some $m \geq 1$ and
	some ample divisor $H$ on the abelian variety, then $m K_X - \fu H$ is effective
	for $m \gg 0$, and therefore $\kappa(X) = \kappa(F) + \dim A$. According to
	\cite[Thm.~2.1]{one-forms}, it also follows that the pullback of
	every holomorphic one-form on $A$ has a non-empty zero locus on $X$, but now under
	the much weaker assumption that $m K_X - \fu H$ is pseudo-effective (instead of
	effective).
\end{example}

\newpar
What about the case when $K_Y$ is not pseudo-effective? By \cite[Cor.~0.3]{BDPP},
this is equivalent to $Y$ being covered by rational curves. Using the MRC fibration
and \theoremref{thm:main}, one can easily
reduce the proof of \conjectureref{conj:CP-strong} to the case where $Y$ is
rationally connected. The argument is very similar to the proof of
\propositionref{prop:Iitaka}, and so we only sketch it. After a birational
modification, we can assume that the MRC fibration of $Y$ is represented by an
algebraic fiber space $p \colon Y \to Z$. If we denote by $P$ the general fiber of
$p$, then $P$ is rationally connected \cite[IV.5]{Kollar}. On the other hand, $Z$ is
not uniruled \cite[Cor.~1.4]{GHS}, and so $K_Z$ is pseudo-effective. Set $g = p \circ
f$, and again denote by $G$ the general fiber of the resulting algebraic fiber space
$g \colon X \to Z$. 
\[
\begin{tikzcd}
X \arrow[bend right=40]{rr}{g} \rar{f} & Y \rar{p} & Z
\end{tikzcd}
\]
We can assume that the ample line bundle has the form $L = L' + \pu L''$,
with $L'$ relatively ample over $Z$, and $L''$ ample on $Z$. Now $f \colon G \to P$ is an
algebraic fiber space over a rationally
connected base, with general fiber $F$. Since $mK_X - \fu L$ is pseudo-effective,
$mK_G - \fu (L' \restr{P})$ is also pseudo-effective; if \conjectureref{conj:CP-strong}
is true for the algebraic fiber space $f \colon G \to P$, then we get 
\[
	\kappa(G) = \kappa(F) + \dim P.
\]
At the same time, \theoremref{thm:main} applies to the algebraic fiber space
$g \colon X \to Z$, because $K_Z$ is pseudo-effective; together with the identity
above, this gives 
\[
	\kappa(X) = \kappa(G) + \dim Z = \kappa(F) + \dim Y.
\]
As explained in \parref{par:Mori}, this identity is equivalent to $mK_X - \fu L$ being
effective for $m$ sufficiently large and divisible. In this way,
\conjectureref{conj:CP-strong} is reduced to the case where $Y$ is
rationally connected.

\newpar
Unfortunately, I am not able to say anything even in the case $Y = \PP^1$.

\begin{example}
	Let $f \colon X \to \PP^1$ be an algebraic fiber space with $\kappa(F) \geq 0$. If
	the divisor class $m K_X - \fu \shO(1)$ is pseudo-effective for some $m \geq 1$,
	then is it true that $m K_X - \fu \shO(1)$ must be effective for $m \gg 0$? The
	argument from above breaks down because the canonical bundle of the base is no longer
	pseudo-effective.
\end{example}

\newpar
One can imagine a proof of \conjectureref{conj:CP-strong} along the following
lines. Choose $k,m \geq 1$ such that $mK_X - \fu(kH)$ is pseudo-effective and
$H^0(F, K_F + mK_F) \neq 0$. Suppose that one could find a singular hermitian metric
$h$ on the line bundle $m K_X - \fu(kH)$, with semi-positive curvature, such
that all sections in $H^0(F, K_F + m K_F)$ are square-integrable relative to
the induced metric $h_F$ on $m K_F$. (In fact, just one nontrivial section would be
enough.) Then the direct image sheaf
\[
	\shF = \fl \Bigl( \OX \bigl( K_X + mK_X - \fu(kH) \bigr) \tensor \shI(h) \Bigr)
\]
would be nonzero, and one could get the desired result by adjusting the coefficients
$k$ and $m$, and repeating the argument from above. The construction in
\parref{par:PT} actually produces such a metric when $K_Y$ is pseudo-effective. Does the
same kind of metric still exist when $K_Y$ is not pseudo-effective?

\providecommand{\bysame}{\leavevmode\hbox to3em{\hrulefill}\thinspace}
\providecommand{\MR}{\relax\ifhmode\unskip\space\fi MR }
\providecommand{\MRhref}[2]{%
  \href{http://www.ams.org/mathscinet-getitem?mr=#1}{#2}
}
\providecommand{\href}[2]{#2}

\end{document}